\numberwithin{equation}{section}
\newtheorem{thm}{Theorem}[section]
\newtheorem{lem}[thm]{Lemma}
\newtheorem{prop}[thm]{Proposition}
\newtheorem{defi}[thm]{Definition}
\newtheorem{rmk}[thm]{Remark}
\newcommand{\End}{\operatorname{End}}
\newcommand{\wt}{\operatorname{wt}}
\newcommand{\xqz}[1]{\lfloor #1 \rfloor}
\newcommand{\Res}{\operatorname{Res}}
\def\Z{\mathbb{Z}}
\def\N{\mathbb{N}}
\def\C{\mathbb{C}}
\begin{document}

\title{Refining twisted bimodules  associated to VOAs}

\author{Shun Xu}

\address{School of Mathematical Sciences, Tongji University, Shanghai, 200092, China}

\email{shunxu@tongji.edu.cn}

\author{Jianzhi Han}

\address{School of Mathematical Sciences, Key Laboratory of Intelligent Computing and
Applications (Ministry of Education), Tongji University, Shanghai, 200092, China}

\email{jzhan@tongji.edu.cn}

\subjclass[2020]{17B69}

\keywords{}

\begin{abstract}
Let $V$ be a vertex operator algebra and $g$ an automorphism of $V$ of finite order $T$.
For any $m, n \in(1/T) \mathbb N$, an $A_{g,n}(V)\!-\!A_{g,m}(V)$ bimodule $A_{g,n, m}(V)=V/O_{g,n,m}(V)$ was defined by Dong and Jiang, where $O_{g,n,m}(V)$ is the sum of three certain subspaces $O_{g,n, m}^{\prime}(V), O_{g,n, m}^{\prime \prime}(V)$ and $O_{g,n, m}^{\prime \prime \prime}(V)$.
In this paper, we show that $O_{g,n, m}(V)=O_{g,n, m}^{\prime}(V)$.
\end{abstract}

\maketitle

\section{Introduction}
Let $V$ be a vertex operator algebra (VOA) (see \cite{B,FLM1}). The Zhu algebra $A(V)=V/O(V)$ \cite{Z} serves as a fundamental tool in the representation theory of $V$. Building on this idea of constructing an associative algebra from $V$, Dong, Li and Mason \cite{DLM3} introduced a family of associative algebras $A_n(V)=V/O_n(V)$, where  $A(V)=A_0(V)$.  Further generalizations include twisted Zhu algebras $A_{g,n}(V)=V/O_{g,n}(V)$ for $n\in(1/T)\N$, associated with an automorphism $g$ of $V$ of finite order $T$  (see  \cite{DLM1,DLM2}). Notably,  $A_{n}(V)$ coincides with $A_{1,n}(V)$.

These twisted Zhu algebras play a pivotal role in studying admissible $g$-twisted $V$-modules $M=\bigoplus_{k\in(1/T)\N}M(k)$.  For each $k\in(1/T)\N$ with $k\le n$, $M(k)$ becomes an $A_{g,n}(V)$-module.  When $V$ is rational,  $A_{g,n}$ decomposes as
$\bigoplus_{M}\bigoplus_{(1/T)\N\ni k\le n}{\rm End}\, M(k)$, where $M$ ranges over all inequivalent irreducible admissible $g$-twisted $V$-modules (see \cite{DJ1}). Importantly, the equivalence classes of such modules are in one-to-one correspondence with certain irreducible $A_{g,n}(V)$-modules (see \cite{DLM2}).

Bimodules were initially introduced to determine the fusion rules  (see  \cite{FZ1,L}). A related class of bimodules, $A_{g,n,m}(V)$, was constructed by Dong and Jiang \cite{DJ1} to study  spaces  of the form $\sum_{M}\bigoplus_{0\le l\le{\rm min}\{m,n\}}{\rm Hom}_{\C}(M(m-l), M(n-l))$.
These generalize the twisted Zhu algebras in the sense that $A_{g,n}(V)=A_{g,n,n}(V)$.
A key advantage  of $A_{g,n,m}(V)$
lies in their explicit use for constructing admissible $g$-twisted $V$-modules of Verma type (see \cite{DJ1}).

There is a deep connection between $A_{g,n,m}(V)$ and the universal enveloping algebra $U(V[g])$ (see \cite{FZ1}). All twisted Zhu algebras are special cases of these bimodules, and the entire family of $A_{g,n,m}(V)$ for $m,n\in(1/T)\N$ can be integrated into $U(V[g])$. More precisely, each bimodule $A_{g,n,m}(V)$ can be realized as a quotient of $U(V[g])$ \cite{HXX} (see also \cite{H,HX,He}).

The definition of  $A_{g,n,m}(V)$
 involves the subspace $O_{g,n,m}(V)$, which, for some techinical reasons,
  is defined as the sum of three components: $O^\prime_{g,n,m}(V)$, $O^{\prime\prime}_{g,n,m}(V)$ and $O^{\prime\prime\prime}_{g,n,m}(V)$, where $O^\prime_{g,n,m}(V)$ is analogous to $O_{g,n}(V)$. An alternative definition of $O_{g,n,m}(V)$ arises from representation theory (see \cite{H,HXX}). It has been shown that $O_{g,n,n}(V)=O^\prime_{g,n,n}(V)$ for all $n\in(1/T)\N$  (see \cite{DJ1}), leading to  the  conjecture that $O_{g,n,m}(V)=O^\prime_{g,n,m}(V)$ for any $m,n\in(1/T)\N$. Recent progress on this conjecture established  \cite{HXX}:\[O_{g, n, m}(V)=\bigoplus_{s\,\not\equiv\, {\bar{m}-\bar{n}} \bmod T} V^s+
    L_{n,m}(V)+O_{g, n, m}^{\prime\prime}(V),\]
    where $\bigoplus_{s\,\not\equiv\, {\bar{m}-\bar{n}} \bmod T} V^s+
    L_{n,m}(V)$ is a subspace of $O_{g,n,m}^\prime(V)$ (see Section 2). This conjecture is completely resolved in the present paper.

\begin{thm}\label{thm1.1}
For any $n,m\in(1/T)\N$, we have
\[
    O_{g,n,m}(V)=O^\prime_{g,n,m}(V).
\]
\end{thm}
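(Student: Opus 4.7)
The starting point is the partial result from \cite{HXX} recalled in the introduction. Since $\bigoplus_{s\,\not\equiv\,\bar m-\bar n \bmod T}V^s+L_{n,m}(V)\subseteq O^\prime_{g,n,m}(V)$, the decomposition
$$O_{g,n,m}(V)=\bigoplus_{s\,\not\equiv\,\bar m-\bar n \bmod T}V^s+L_{n,m}(V)+O^{\prime\prime}_{g,n,m}(V)$$
immediately implies that Theorem~\ref{thm1.1} will follow once we establish the containment
$$O^{\prime\prime}_{g,n,m}(V)\subseteq O^\prime_{g,n,m}(V).$$
The third summand $O^{\prime\prime\prime}_{g,n,m}(V)$ needs no separate treatment since it has already been absorbed into the right-hand side above. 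So the entire paper can be focused on this one inclusion.

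To attack this inclusion I would work directly with a generating set of $O^{\prime\prime}_{g,n,m}(V)$, whose typical elements have the shape $(L(-1)+L(0)+c_{m,n,\wt u})u$ for homogeneous $u\in V^s$. A first reduction: if $s\not\equiv \bar m-\bar n\pmod T$, such a generator is already in $V^s\subseteq O^\prime_{g,n,m}(V)$ and there is nothing to prove, so one may assume $s\equiv \bar m-\bar n\pmod T$. In that congruence class the plan is to expand a carefully chosen twisted $\ast$-product $u\ast_{g,n,m}\one$ (and more generally $u\ast_{g,n,m}v$ for suitable auxiliary $v\in V$) directly from the definition of $O^\prime_{g,n,m}(V)$, apply the Borcherds/Jacobi identity for the twisted vertex operator $Y_g$, and then isolate the $L(-1)$-contribution via the standard relation $Y_g(L(-1)u,z)=\tfrac{d}{dz}Y_g(u,z)$. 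Matching the outcome to the defining $O^{\prime\prime}$-relation should realize each generator of $O^{\prime\prime}_{g,n,m}(V)$ as an explicit element of $O^\prime_{g,n,m}(V)$, possibly up to terms already known (from \cite{HXX}) to lie there.

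The principal obstacle will be the binomial bookkeeping. Generators of $O^\prime_{g,n,m}(V)$ carry doubly-shifted coefficients involving $\binom{-n-1}{i}$ and $\binom{-m-1}{j}$, and one must pick the right linear combination of $\ast$-products so that its $L(-1)$-component reproduces the $O^{\prime\prime}$-relation with the correct weight-shift constant $c_{m,n,\wt u}$. In the diagonal case $m=n$ this was handled in \cite{DJ1} by an essentially symmetric cancellation, but for $m\neq n$ the two shifts no longer align and the cancellation must be forced either by an induction on $\wt u$ or by an explicit combinatorial identity among shifted binomial sums. Once such an identity is secured, the desired inclusion $O^{\prime\prime}_{g,n,m}(V)\subseteq O^\prime_{g,n,m}(V)$ follows, and combining it with the partial result of \cite{HXX} completes the proof of Theorem~\ref{thm1.1}.
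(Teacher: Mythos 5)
Your opening reduction is sound: granting the result of \cite{HXX} quoted in the introduction, and the fact that $\bigoplus_{s\,\not\equiv\,\bar m-\bar n \bmod T}V^s+L_{n,m}(V)\subseteq O^{\prime}_{g,n,m}(V)$, the theorem is indeed equivalent to the single inclusion $O^{\prime\prime}_{g,n,m}(V)\subseteq O^{\prime}_{g,n,m}(V)$. But from that point on the proposal rests on a misidentification of what $O^{\prime\prime}_{g,n,m}(V)$ is. Its generators are \emph{not} elements of the shape $(L(-1)+L(0)+c_{m,n,\wt u})u$ --- those are the generators of $L_{n,m}(V)$, which is by definition already a summand of $O^{\prime}_{g,n,m}(V)$, so showing they lie in $O^{\prime}_{g,n,m}(V)$ is vacuous. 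The actual generators of $O^{\prime\prime}_{g,n,m}(V)$ are the associativity defects
\[
u *_{g, m, p_{3}}^{n}\big((a *_{g, p_{1}, p_{2}}^{p_{3}} b) *_{g, m, p_{1}}^{p_{3}} c-a *_{g, m, p_{2}}^{p_{3}}(b *_{g, m, p_{1}}^{p_{2}} c)\big),
\]
quadruple products measuring the failure of associativity of the $*$-operations at intermediate levels $p_1,p_2,p_3$. Your plan --- expand $u*_{g,n,m}\mathbf 1$, apply the Jacobi identity, and isolate the $L(-1)$-contribution via $Y(L(-1)u,z)=\frac{d}{dz}Y(u,z)$ --- never engages with these elements, so the genuinely hard part of the inclusion is untouched; the remaining steps are also only a sketch of intent (``should realize'', ``once such an identity is secured'') rather than an argument.

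For comparison, the paper does not attempt a direct combinatorial verification at all. It first reduces the theorem (Lemma \ref{lem3.3}, via the $\theta=e^{L(1)}(-1)^{L(0)}$ symmetry of Lemma \ref{lem3.2}) to showing $\theta(V^{r}\cap O_{g^{-1},m,n}(V))\subseteq O^{\prime}_{g,n,m}(V)$, and then works representation-theoretically: it embeds $(A^{\prime}_{g,n,m}(V))^*$ into $\Omega_n(\mathfrak D(V),Y^R)$ for the twisted regular representation (Proposition \ref{ex-porp}) and evaluates $\langle\operatorname{Res}_x x^{p-1}(Y^R)^{[-1]}(x^{L(0)}u,x)f,\mathbf 1\rangle=\zeta^r\langle f,\theta(u)\rangle$, so that Lemma \ref{lem3.1} forces $\langle f,\theta(u)\rangle=0$ for all $f$ vanishing on $O^{\prime}_{g,n,m}(V)$. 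A direct computational proof along the lines you propose does exist (it is the approach of \cite{ZY}, cited in the introduction), but it requires handling the quadruple $*$-products themselves, not $L(-1)$-shift elements.
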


 The proof of this theorem relies crucially on the twisted regular representation of $V$ \cite{LS1}, which focuses on constructing a new twisted module on the dual space of a given twisted module. Zhu \cite{ZY} also employed this representation to  prove Theorem \ref{thm1.1}, demonstrating through direct computation that    each of $O^{\prime\prime}_{g,n,m}(V)$ and $O^{\prime\prime\prime}_{g,n,m}(V)$ is a subspace of $O^{\prime}_{g,n,m}(V).$
  In our framework, the representation $(\mathfrak D(V), Y^R)$  that we utilize corresponds to only half part of the  twisted  regular representation. And  our approach differs by exploiting  the relationship between $O_{g,n,m}(V)$ and $\Omega_m(M, Y_M)$ (see Lemma \ref{lem3.1}),  resulting in a more concise proof.

This paper is organized as follows. In Section 2, we first recall the construction of $A_{g,n,m}(V)$ and then  use some known results to  give an equivalent condition  for $O_{g,n,m}(V)=O^\prime_{g,n,m}(V).$ In Section 3, we review  the twisted regular representation $(\mathfrak{D}(V), Y^R)$ and establish the inclusion  $\left(A_{g,n, m}^{\prime}(V)\right)^*\subseteq\Omega_{n}\left(\mathfrak{D}(V),Y^R\right)$ (see Proposition \ref{ex-porp}).  Section 4  is devoted to the proof of Theorem \ref{thm1.1}.

\section{$O_{g,n,m}(V)$ and $\Omega_n(M,Y_M)$}

Let $(V,Y,\mathbf{1},\omega)$ be a vertex operator algebra. For any $n\in\Z$, elements in $V_n$ are called homogeneous  and if $u\in V_n$  we define $\operatorname{wt}u=n$. As a convention, when  $\operatorname{wt}u$ appears we always assume that $u$ is homogeneous. Write $L(n)=\omega_{n+1}$ for any $n\in\Z$.

\begin{defi}
A linear isomorphism $g$ of $V$ is called an automorphism of $V$ if
$$
g(\mathbf{1})=\mathbf{1},\ g(\omega)=\omega \ \text { and } \ g(Y(u, z) v)=Y(g(u), z) g(v) \ \text { for } u, v \in V
$$
\end{defi}
   We fix $g$ to be an automorphism of $V$ of finite order $T$. Then $V$ has the following decomposition with respect to $g$:
$$
V=\bigoplus_{r=0}^{T-1} V^r, \quad \text { where } V^r=\left\{v \in V \mid g v=\zeta^r v\right\} \text{ with } \zeta=e^{-2\pi\sqrt{-1}/T}.
$$

Let us first recall the construction of the $A_{g,n}(V)\!-\!A_{g,m}(V)$-bimodule $A_{g,n,m}(V)$. For any $n \in(1 / T) \N,$ there exists an $\bar{n} \in \{0,1,\cdots,T-1\}$ such that $n=\lfloor n\rfloor+\bar{n} / T$, where $\lfloor\cdot\rfloor$ is
the floor function.  For $0\leq r\leq T-1$,  define $\delta_{i}(r)=1$ if $r \leq i \leq T-1$ and $\delta_{i}(r)=0$ if $i<r$; and set $\delta_{i}(T)=0$ for $0\le i\le T-1$.
For any formal variable $x$ and any $\alpha \in \mathbb{R}$, define
$$
(1+x)^\alpha=\sum_{i \in \N} \binom{\alpha}{i} x^i,
$$
where $\binom{\alpha}{i}=\frac{\alpha(\alpha-1) \cdots(\alpha-i+1)}{i!}$ for $i>0$ and $\binom{\alpha}{0}=1$.

Now for  $u \in V^r, v\in V$ and $m,n,p \in (1/T)\N$, define the product $ *_{g, m, p}^{n}$ on $V$ as follows:
\begin{align*}
u *_{g, m, p}^{n} v=&\sum_{i=0}^{\lfloor p\rfloor}(-1)^{i}
\binom{\lfloor m\rfloor+\lfloor n\rfloor-\lfloor p\rfloor-1+\delta_{\bar{m}}(r)+\delta_{\bar{n}}(T-r)+i}{i}\\
&\cdot \operatorname{Res}_{x} \frac{(1+x)^{\operatorname{wt} u+\lfloor m\rfloor+\delta_{\bar{m}}(r)+r / T-1}}{x^{\lfloor m\rfloor+\lfloor n\rfloor-\lfloor p\rfloor+\delta_{\bar{m}}(r)+\delta_{\bar{n}}(T-r)+i}}
Y(u, x) v,\end{align*}
if $\bar{p}-\bar{n}\equiv r\ \operatorname{mod}\ T$; and
$u *_{g, m, p}^{n} v=0$ otherwise.
Let
$$
O_{g, n, m}^{\prime}(V)=\operatorname{span}\left\{u \circ_{g, m}^n v \mid u, v \in V\right\}+L_{n, m}(V),
$$
where $L_{n, m}(V)=\operatorname{span}\{(L(-1)+L(0)+m-n) u \mid u \in V\}$ and for $u \in V^r, v \in V$,
$$
u \circ_{g, m}^n v=\operatorname{Res}_x \frac{(1+x)^{\wt u+\delta_{\bar{m}}(r)+\lfloor m\rfloor+r / T-1}}{x^{\lfloor m\rfloor+\lfloor n\rfloor+\delta_{\bar{m}}(r)+\delta_{\bar{n}}(T-r)+1}} Y(u, x) v .
$$
From \cite[Lemmas 3.3 and 3.4]{DJ1}, we respectively have
\begin{equation}\label{eq3.1}
\operatorname{Res}_x \frac{(1+x)^{\wt u+\delta_{\bar{m}}(r)+\lfloor m\rfloor+r / T+s-1}}{x^{\lfloor m\rfloor+\lfloor n\rfloor+\delta_{\bar{m}}(r)+\delta_{\bar{n}}(T-r)+k+1}} Y(u, x) v\in O^\prime_{g,n,m}(V)
\end{equation}
for any integers $k\geq s\geq 0$ and
\begin{equation}\label{eq3.2}
  Y(u, x) \mathbf{1} \equiv(1+x)^{-\wt u-m+n} u \bmod O^\prime_{g,n, m}(V).
\end{equation}
For any $a, b, c, u \in V$ and any $p_{1}, p_{2}, p_{3} \in (1/T)\N $, let $O_{g, n, m}^{\prime \prime}(V)$ be the linear span of
\begin{equation*}
u *_{g, m, p_{3}}^{n}\big((a *_{g, p_{1}, p_{2}}^{p_{3}} b) *_{g, m, p_{1}}^{p_{3}} c-a *_{g, m, p_{2}}^{p_{3}}(b *_{g, m, p_{1}}^{p_{2}} c)\big).
\end{equation*} Let
$$
O_{g, n, m}^{\prime \prime \prime}(V)=\sum_{p_{1}, p_{2} \in (1/T) \N}\left(V *_{g, p_{1}, p_{2}}^{n} O_{g, p_{2}, p_{1}}^{\prime}(V)\right) *_{g, m, p_{1}}^{n} V
$$
and
$$
O_{g, n, m}(V)=O_{g, n, m}^{\prime}(V)+O_{g, n, m}^{\prime \prime}(V)+O_{g, n, m}^{\prime \prime \prime}(V).
$$ Set  $A_{g,n,m}(V)=V/O_{g,n,m}(V)$ for any $m,n\in(1/T)\N.$ Then each $A_{g,n,m}(V)$ is an $A_{g,n}(V)\!-\!A_{g,m}(V)$-bimodule, as mentioned in our introduction.

Let $(M, Y_M)$ be a weak $g$-twisted $V$-module. For  any $n\in(1/T)\N$,  define $$\Omega_n(M, Y_M)=\{w\in M\mid u_{\wt u-1+i}w=0 \text{ for any } u\in V\text{ and }n<i\in(1/T)\Z\}.$$
As for the relation between  $O_{g,n,m}(V)$ and   $\Omega_n(M, Y_M)$,  the following holds  (see \cite[Lemma 5.2]{DJ1} or \cite[Remark 3.4\,(1)]{HXX}).
\begin{lem}\label{lem3.1}
	For any weak $g$-twisted $V$-module $M,$ $m,n \in (1/T)\N$ and $u\in O_{g, n, m}(V)$,
    we have ${\rm Res}_xx^{m-n-1}Y_M(x^{L(0)}u,x) =0$ on $\Omega_{m}(M,Y_M)$.
\end{lem}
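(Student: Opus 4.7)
The approach is to prove the vanishing separately on each summand of $O_{g,n,m}(V)=O^{\prime}_{g,n,m}(V)+O^{\prime\prime}_{g,n,m}(V)+O^{\prime\prime\prime}_{g,n,m}(V)$. Fix $w\in\Omega_m(M,Y_M)$ and note that for a homogeneous element $v\in V$,
\begin{equation*}
\operatorname{Res}_{x} x^{m-n-1}Y_M(x^{L(0)}v,x)=v_{\wt v+m-n-1},
\end{equation*}
so the statement ultimately reduces to showing that a certain explicit finite linear combination of mode operators annihilates $w$.

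I would handle $O^{\prime}_{g,n,m}(V)$ first. For the subspace $L_{n,m}(V)$, use $Y_M(L(-1)v,x)=\frac{d}{dx}Y_M(v,x)$ together with $L(0)v=(\wt v)\,v$; the two resulting residues cancel, essentially because $\operatorname{Res}_x\frac{d}{dx}\bigl(x^{\wt v+m-n}Y_M(v,x)\bigr)=0$. For an element $u\circ^n_{g,m}v$ with $u\in V^r$, I would substitute the definition into the outer residue, exchange the order of residues, and apply the twisted Jacobi identity for $Y_M$ to rewrite $Y_M(Y(u,y)v,x)$ in terms of $Y_M(u,x+y)Y_M(v,x)$ and $Y_M(v,x)Y_M(u,y+x)$. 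The exponents of $(1+y)$ and $y^{-1}$ in the definition of $\circ^n_{g,m}$ are calibrated exactly so that, after this substitution, every resulting mode $u_k$ or $v_k$ acting on $w$ has index falling in the range forced to be zero by the defining condition of $\Omega_m(M,Y_M)$.

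For $O^{\prime\prime}_{g,n,m}(V)$ and $O^{\prime\prime\prime}_{g,n,m}(V)$, I would first set up a dictionary between the products $*^n_{g,m,p}$ and compositions of module operators on the $\Omega_\bullet$-spaces: for $v\in V$, the assignment $w\mapsto\operatorname{Res}_x x^{p-m-1}Y_M(x^{L(0)}v,x)w$ sends $\Omega_m(M,Y_M)$ into $\Omega_p(M,Y_M)$, and the composition of two such operations corresponds exactly to the single operation built from $*^n_{g,m,p}$ modulo operators already known to annihilate the relevant $\Omega$-space. Granting this, the associator-type expression defining $O^{\prime\prime}_{g,n,m}(V)$ becomes a difference of two equal module-operator compositions, hence annihilates $w$. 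The space $O^{\prime\prime\prime}_{g,n,m}(V)$ is then disposed of by the same dictionary combined with the $O^{\prime}$-case already proved: the middle factor in $O^{\prime}_{g,p_2,p_1}(V)$ already sends $\Omega_{p_1}(M,Y_M)$ to zero, so the full $*$-product expression vanishes on $\Omega_m(M,Y_M)$.

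The main obstacle is the generating-function bookkeeping in the $u\circ^n_{g,m}v$ step: tracking how the exponents $\wt u+\lfloor m\rfloor+\delta_{\bar m}(r)+r/T-1$ and $\lfloor m\rfloor+\lfloor n\rfloor+\delta_{\bar m}(r)+\delta_{\bar n}(T-r)+1$ interact under the twisted Jacobi identity on $M$, so as to force each residual mode acting on $w$ into the range killed by the $\Omega_m$ condition; the parity offsets $\delta_{\bar m}(r)$ and $\delta_{\bar n}(T-r)$ are the delicate pieces here. Once that computation is in place, the $O^{\prime\prime}$ and $O^{\prime\prime\prime}$ cases are essentially formal consequences of associativity of composition of the operators on the filtration $\{\Omega_k(M,Y_M)\}_k$.
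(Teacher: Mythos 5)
Your outline matches the standard argument: the paper itself does not prove Lemma \ref{lem3.1} but imports it from \cite[Lemma 5.2]{DJ1} (see also \cite[Remark 3.4(1)]{HXX}), and the proof there proceeds essentially as you describe --- direct verification on $L_{n,m}(V)$ and on the elements $u\circ^n_{g,m}v$ via the twisted Jacobi identity, followed by a ``dictionary'' identifying the composition $o(u)o(v)$ of mode operators with $o(u*v)$ on the spaces $\Omega_\bullet$, which disposes of $O^{\prime\prime}_{g,n,m}(V)$ and $O^{\prime\prime\prime}_{g,n,m}(V)$ formally. One correction you should make before carrying this out: the operator that sends $\Omega_m(M,Y_M)$ into $\Omega_p(M,Y_M)$ is $\operatorname{Res}_x x^{m-p-1}Y_M(x^{L(0)}v,x)=v_{\operatorname{wt}v+m-p-1}$, not $\operatorname{Res}_x x^{p-m-1}Y_M(x^{L(0)}v,x)$ as you wrote; your exponent is transposed. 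This is forced by consistency with the statement of the lemma itself, where $\Omega_m$ is the source and the exponent is $m-n-1$, and with the flipped convention the composition identity $o_{n,p}(u)\,o_{p,m}(v)=o_{n,m}(u*^n_{g,m,p}v)$ on $\Omega_m(M,Y_M)$ would fail (degree-wise your operator lands in $M(2m-p)$ rather than $M(p)$). With that fixed, your treatment of $L_{n,m}(V)$ via $\operatorname{Res}_x\frac{d}{dx}\bigl(x^{\operatorname{wt}v+m-n}Y_M(v,x)\bigr)=0$ is correct, and your reduction of $O^{\prime\prime}_{g,n,m}(V)$ to associativity of operator composition and of $O^{\prime\prime\prime}_{g,n,m}(V)$ to the already-treated $O^{\prime}$-case is exactly right; the remaining substantive work is the residue computation for $u\circ^n_{g,m}v$ and the proof of the dictionary itself, both of which you have correctly identified as the technical core.
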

The following result can be found in \cite[Lemma 3.1 and Proposition 4.2]{DJ1}.
\begin{lem}\label{lem3.2}
    	Let $m,n\in (1/T)\N$.
        \begin{itemize}\item[(1)] Let $ r\in\{0,1,\ldots, T-1\}$ with $r\equiv\bar{m}-\bar{n}\bmod T$.  Then
    	\[
    	O^{\prime}_{g,n,m}(V)=\bigoplus_{s\,\not\equiv\,{\bar m-\bar n}\bmod T} V^s\oplus (V^r\cap O^{\prime}_{g,n,m}(V))
    	\]
        and
        \[
    	O_{g,n,m}(V)=\bigoplus_{s\,\not\equiv\,{\bar m-\bar n}\bmod T} V^s\oplus (V^r\cap O_{g,n,m}(V)).
    	\]

        \item[(2)]  Set $\theta=e^{L(1)}(-1)^{L(0)}\in\End V$.  We have
        \[\theta(V^{i}\cap O_{g^{-1},m,n}(V))=V^i\cap O_{g,n,m}(V)\]
for any $i\in\{0,1,\cdots,T-1\}$.
\end{itemize}
    \end{lem}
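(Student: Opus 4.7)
The lemma has two independent parts, and I treat them in turn.

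For part (1), the essential content beyond graded-homogeneity is that $V^{s}\subseteq O'_{g,n,m}(V)$ whenever $s\not\equiv\bar m-\bar n\bmod T$. The natural tool is relation (\ref{eq3.1}) specialized to $v=\mathbf{1}$: since $Y(u,x)\mathbf{1}=e^{xL(-1)}u$, the residue expands as a binomial-weighted sum $\sum_{j\ge 0}c_{j}\,L(-1)^{j}u\in O'_{g,n,m}(V)$. Using $L_{n,m}(V)\subseteq O'_{g,n,m}(V)$, every $L(-1)^{j}u$ reduces modulo $O'_{g,n,m}(V)$ to $(-1)^{j}\prod_{i=0}^{j-1}(\wt u+i+m-n)\cdot u$, so the whole expression collapses to a scalar $P(k,s,\ldots)\cdot u\in O'_{g,n,m}(V)$. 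The plan is then to inspect $P$, whose shape depends on the residue class of $r=s$ through the indicator terms $\delta_{\bar m}(r)$ and $\delta_{\bar n}(T-r)$, and to select the free integer parameters in (\ref{eq3.1}) so that $P$ is a nonzero scalar precisely when $s\not\equiv\bar m-\bar n$.

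The direct-sum structure then follows once one checks that every defining generator of $O'_{g,n,m}(V)$ is homogeneous with respect to $V=\bigoplus_{s}V^{s}$: if $u\in V^{r}$ and $v\in V^{t}$ then $u\circ^{n}_{g,m}v\in V^{r+t}$, and $L(-1)+L(0)+m-n$ preserves each $V^{s}$. Consequently $O'_{g,n,m}(V)$ is itself $V^{s}$-graded. The analogous claim for $O_{g,n,m}(V)$ then uses that $u*^{n}_{g,m,p}v$ vanishes unless $\bar p-\bar n\equiv r\bmod T$, making each $*$-product grade-homogeneous too, so the same projection argument applies to the generators of $O''_{g,n,m}(V)$ and $O'''_{g,n,m}(V)$.

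For part (2), the strategy is to exploit the standard identity relating $Y(u,z)$ and $Y(e^{zL(1)}(-z^{-2})^{L(0)}u,z^{-1})$ (the formula underlying the construction of contragredient modules), together with the observation that $\theta$ commutes with $g$ (because $L(0),L(1)\in V^{0}$), so that $\theta$ stabilizes every $V^{i}$. The plan is to apply $\theta$ to each defining expression of $O_{g^{-1},m,n}(V)$ and check, term by term, that the output is a defining expression of $O_{g,n,m}(V)$ with $m,n$ swapped: $\theta(u\circ^{m}_{g^{-1},n}v)$ lands in $V^{r+t}\cap O'_{g,n,m}(V)$ for $u\in V^{r}$, $v\in V^{t}$; $\theta$ maps $L_{m,n}(V)$ into $L_{n,m}(V)$; and the $*$-product-based generators of $O''$ and $O'''$ for $g^{-1}$ are carried to those for $g$. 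The principal obstacle is the combinatorial bookkeeping in part (1): tracking $\delta_{\bar m}(r)$, $\delta_{\bar n}(T-r)$, the floors $\lfloor m\rfloor,\lfloor n\rfloor$, and the free parameters through $P$ to exhibit a choice guaranteeing nonvanishing precisely on the residue classes $s\not\equiv\bar m-\bar n$. In part (2), the corresponding technical point is matching the change of variable $x\mapsto x^{-1}(1+x)^{-1}$ induced by the adjoint identity against the explicit $(1+x)^{\alpha}/x^{\beta}$ exponents appearing in the definitions, so that the two index sets align after swapping $m\leftrightarrow n$ and $g\leftrightarrow g^{-1}$.
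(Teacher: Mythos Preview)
The paper does not prove Lemma~\ref{lem3.2}: the sentence preceding it reads ``The following result can be found in \cite[Lemma 3.1 and Proposition 4.2]{DJ1},'' and no argument is supplied. So there is no proof in the paper to compare against; your sketch is an independent reconstruction of what is established in \cite{DJ1}.

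Your outline for part~(1) is correct in substance. Two remarks. First, the reduction of $L(-1)^{j}u$ modulo $L_{n,m}(V)$ that you perform is exactly the content of \eqref{eq3.2}, so the computation collapses to evaluating
\[
\Res_{x}\,x^{-\beta}(1+x)^{\alpha}\,u,\qquad
\alpha=\delta_{\bar m}(s)+\tfrac{s-\bar m+\bar n}{T}+\lfloor n\rfloor-1,
\]
and $\alpha\notin\Z$ whenever $s\not\equiv\bar m-\bar n\bmod T$, forcing the binomial coefficient to be nonzero; there is no need to tune the free parameters to hit ``precisely'' the bad classes. Second, once $\bigoplus_{s\neq r}V^{s}\subseteq O'_{g,n,m}(V)$ is known, the decomposition for $O_{g,n,m}(V)$ is automatic, since any subspace $W$ with $\bigoplus_{s\neq r}V^{s}\subseteq W\subseteq V$ satisfies $W=\bigoplus_{s\neq r}V^{s}\oplus(V^{r}\cap W)$; the homogeneity analysis of the $*$-products is therefore unnecessary here.

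For part~(2) your outline matches the strategy of \cite[Proposition 4.2]{DJ1}: one checks that $\theta$ is an involution stabilizing each $V^{i}$ and that it intertwines the $\circ$- and $*$-operations for $(g^{-1},m,n)$ with those for $(g,n,m)$. The only place your sketch is thin is the claim that the $O''$ and $O'''$ generators are ``carried to those for $g$'': in \cite{DJ1} this is obtained not generator-by-generator but by proving a single identity of the form $\theta(u*_{g^{-1}}v)=\theta(v)\,\bar *_{g}\,\theta(u)$ (with appropriate subscripts), i.e.\ that $\theta$ is an anti-homomorphism of the iterated $*$-structures. Once that identity is in hand, the nested expressions defining $O''$ and $O'''$ transform mechanically. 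You should expect the bookkeeping you flag---matching $(1+x)^{\alpha}/x^{\beta}$ exponents after the substitution $x\mapsto x^{-1}$ together with the conjugation \eqref{eq4.4}---to be the entire content of that computation.
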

    The following lemma is the key to showing our main result, which follows easily from Lemma \ref{lem3.2}.
\begin{lem}\label{lem3.3}
For any $n,m\in(1/T)\N$,
    then
    $
    O_{g,n,m}(V)=O^\prime_{g,n,m}(V)
    $
    if and only if
   $$
    \theta(V^{r}\cap O_{g^{-1},m,n}(V))\subseteq  O^\prime_{g,n,m}(V),
    $$
    where $r\in\{0,1,\cdots,T-1\}$ such that $r\equiv\bar{m}-\bar{n}\bmod T$.
\end{lem}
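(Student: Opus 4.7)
The plan is to reduce the equality \(O_{g,n,m}(V)=O'_{g,n,m}(V)\) to a single inclusion on the $V^r$ component via the eigenspace decomposition in Lemma \ref{lem3.2}(1), and then to rewrite that inclusion on the $g$-side by transporting it from the $g^{-1}$-side using Lemma \ref{lem3.2}(2).

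First, I would observe the obvious inclusion $O'_{g,n,m}(V)\subseteq O_{g,n,m}(V)$, which is immediate from the definition $O_{g,n,m}(V)=O'_{g,n,m}(V)+O''_{g,n,m}(V)+O'''_{g,n,m}(V)$. Thus the two subspaces are equal precisely when $O_{g,n,m}(V)\subseteq O'_{g,n,m}(V)$. Now invoking Lemma \ref{lem3.2}(1), both $O_{g,n,m}(V)$ and $O'_{g,n,m}(V)$ decompose along the $g$-eigenspaces with the \emph{same} contribution $\bigoplus_{s\not\equiv\bar m-\bar n\bmod T}V^s$ off the distinguished component. Consequently the containment $O_{g,n,m}(V)\subseteq O'_{g,n,m}(V)$ is equivalent to the single inclusion
\[
V^r\cap O_{g,n,m}(V)\;\subseteq\; O'_{g,n,m}(V),
\]
where $r\equiv\bar m-\bar n\bmod T$.

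Second, I would apply Lemma \ref{lem3.2}(2) with $i=r$, which gives the identification
\[
\theta\bigl(V^r\cap O_{g^{-1},m,n}(V)\bigr)=V^r\cap O_{g,n,m}(V).
\]
Substituting this into the reduced inclusion above, the desired equivalence
\[
O_{g,n,m}(V)=O'_{g,n,m}(V)\;\Longleftrightarrow\;\theta\bigl(V^r\cap O_{g^{-1},m,n}(V)\bigr)\subseteq O'_{g,n,m}(V)
\]
follows directly.

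This argument is essentially bookkeeping around Lemma \ref{lem3.2}, so I do not anticipate a genuine obstacle; the only point requiring care is making sure the two decompositions in Lemma \ref{lem3.2}(1) are matched with the correct index $r\equiv\bar m-\bar n\bmod T$ so that the ``off-diagonal'' components $\bigoplus_{s\not\equiv\bar m-\bar n}V^s$ cancel before applying the $\theta$-transport from the $g^{-1}$-side.
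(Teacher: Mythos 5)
Your proof is correct and is exactly the argument the paper intends: the paper gives no written proof, stating only that the lemma ``follows easily from Lemma \ref{lem3.2}'', and your reduction via the common off-diagonal part $\bigoplus_{s\not\equiv\bar m-\bar n}V^s$ followed by the $\theta$-transport of Lemma \ref{lem3.2}(2) is precisely that bookkeeping.
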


\section{Twisted regular representation $\mathfrak{D}(V)$}
Define a linear map
$$
Y^*(\cdot, x): V \rightarrow\left(\operatorname{End} V^*\right)\left[\left[x, x^{-1}\right]\right]
$$
by
$$
\left\langle Y^*(u, x) f, v\right\rangle=\left\langle f, Y\left(e^{x L(1)}\left(-x^{-2}\right)^{L(0)} u, x^{-1}\right) v\right\rangle
$$
for $u ,v\in V$ and  $f \in V^*\ (\text{the dual space of}\ V)$.

\begin{defi}\cite{LS1}
    Denote by $\mathfrak{D}(V)$ the set of all elements $f\in V^*$, satisfying the condition that for every $v \in V^r$ with $r\in\{0,1,\ldots, T-1\}$, there exists $k \in \mathbb{Z}$ such that
    \begin{equation}\label{eq4.1}
x^{-r/T}(x+1)^{k+r/T} Y^*(v, x)f \in V^*((x))
\end{equation}
\end{defi}

Let $u \in V^r$ with $r\in\{0,1,\cdots,T-1\}$ and $f \in \mathfrak{D}(V)$. Following \cite{LS1}, we define
$$
Y^{R}(u, x) f=\zeta^r(1+x)^{-k-r/T}\left((x+1)^{k+r/T} Y^*(u, x) f\right) \in x^{r/T}V^*((x)),
$$
where $k$ is any integer such that (\ref{eq4.1}) holds. The following is a special case of \cite[Theorem 3.6]{LS1}: taking $\sigma_1=g,\sigma_2=g^{-1},z=-1$.
\begin{thm}\label{g-1-weak}
$\left(\mathfrak{D}(V), Y^R\right)$ is a weak $g^{-1}$-twisted $V$-module.
    \end{thm}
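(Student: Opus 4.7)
The plan is to reduce to \cite[Theorem 3.6]{LS1} by checking that the $\sigma_1=g$, $\sigma_2=g^{-1}$, $z=-1$ specialization of the Li--Sun construction reproduces exactly the vertex operator $Y^R$ defined above on exactly the space $\mathfrak{D}(V)$. Concretely, I would first verify that $Y^R(u,x)f$ is well-defined, i.e.\ independent of the integer $k$ chosen in \eqref{eq4.1}: if $k'\ge k$ is another such integer, the factor $(1+x)^{k'-k}$ appears symmetrically in front of and inside the outer bracket, and cancels. The same manipulation shows $Y^R(u,x)f \in x^{r/T}V^*((x))$, which gives the truncation axiom and the correct $g^{-1}$-grading (since for $u\in V^r$ the eigenvalue of $g^{-1}$ is $\zeta^{-r}=e^{2\pi\sqrt{-1}r/T}$, the half-integer exponents $r/T+\Z$ in $Y^R(u,x)f$ are exactly those required by a weak $g^{-1}$-twisted module).

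Next I would verify the vacuum axiom $Y^R(\mathbf{1},x)f=f$, which is immediate from $Y^*(\mathbf{1},x)=\id_{V^*}$ (the choice $k=0$ is admissible and the conjugating factors $(1+x)^0$ are trivial). The main step is the twisted Jacobi identity. The strategy is to translate the Jacobi identity for $Y^R$ back to one for $Y^*$ by stripping off the conjugating factor $(1+x)^{k+r/T}$, and then use the adjoint formula
\[
\langle Y^*(u,x)f,v\rangle = \langle f, Y(e^{xL(1)}(-x^{-2})^{L(0)}u,x^{-1})v\rangle
\]
to convert it into an identity inside $V$ itself under the substitution $x\mapsto x^{-1}$. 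Together with the standard conjugation formulas for $e^{xL(1)}(-x^{-2})^{L(0)}$, this reduces the desired identity to the ordinary Jacobi identity for the untwisted action of $V$ on $V$, which is exactly the mechanism of \cite[Theorem 3.6]{LS1} at the specialized parameters.

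The main obstacle is purely bookkeeping: keeping track of the fractional exponents $r/T$ introduced by the twist, ensuring that the formal delta-function manipulations in the twisted Jacobi identity make sense once one of the variables is substituted by $x^{-1}$, and checking that the condition $f\in\mathfrak{D}(V)$ is preserved under $Y^R(u,x)$ so that iterated products $Y^R(u,x_1)Y^R(v,x_2)f$ actually lie in a space where the identity can be verified coefficient-wise. Once these technical checks are in place, invoking \cite[Theorem 3.6]{LS1} with $\sigma_1=g$, $\sigma_2=g^{-1}$, $z=-1$ yields the result.
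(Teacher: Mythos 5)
Your proposal takes essentially the same route as the paper: the paper gives no independent proof, but simply observes that the statement is the special case of \cite[Theorem 3.6]{LS1} obtained by taking $\sigma_1=g$, $\sigma_2=g^{-1}$, $z=-1$, which is exactly the reduction you propose. The additional verifications you sketch (well-definedness of $Y^R$, the vacuum axiom, the twisted Jacobi identity) are the content of Li--Sun's theorem itself and are correctly deferred to that reference.
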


\begin{rmk}\label{r-m-k-1}
Let $U$ be a vector space and $W$ its subspace.  Then $(U/W)^*$ is naturally viewed as a subspace of $U^*$  through the
quotient map $U\to U/W$.
Thus, if $u\in U$ satisfies $f(u)=0$ for any $f\in(U/W)^*$, then $u\in W.$
\end{rmk}

\begin{prop}\label{ex-porp}
Let $A^\prime_{g,n,m}(V)=V/O^\prime_{g,n,m}(V)$ for any
$m, n \in (1/T)\N$ and let $r\in\{0,1,\ldots, T-1\}$. Then for any  $f \in\left(A_{g,n, m}^{\prime}(V)\right)^*$ and for any $u \in V^r$ we have
$$
x^{\lfloor n\rfloor+\delta_{\bar{n}}(T-r)+\wt u-r/T}(x+1)^{\wt u+\delta_{\bar{m}}(r)+\lfloor m\rfloor+r / T-1}Y^*\left(u, x\right) f \in V^*[[x]].
$$ Furthermore, we have
$
\left(A_{g,n, m}^{\prime}(V)\right)^*\subseteq\Omega_{n}\left(\mathfrak{D}(V),Y^R\right)
$
as subspaces of $V^*$.
\end{prop}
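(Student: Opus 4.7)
The plan is to reduce the truncation for $Y^*(u,x)f$ to the relation \eqref{eq3.1} already known for $O^{\prime}_{g,n,m}(V)$, via a formal change of variable $x=z^{-1}$ inside a residue. First I would write
\[
\langle Y^*(u,x)f,v\rangle = (-1)^{\wt u}\sum_{i\ge 0}\frac{x^{-2\wt u+i}}{i!}\,\phi_i(x^{-1}),\qquad \phi_i(z):=\langle f,Y(L(1)^i u,z)v\rangle,
\]
a finite sum since $L(1)$ is locally nilpotent on $u$, with each $\phi_i(z)\in\C((z))$. Abbreviate $M=\lfloor n\rfloor+\delta_{\bar n}(T-r)+\wt u-r/T$, $N=\wt u+\delta_{\bar m}(r)+\lfloor m\rfloor+r/T-1$, and $P=\lfloor m\rfloor+\lfloor n\rfloor+\delta_{\bar m}(r)+\delta_{\bar n}(T-r)$.

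I would then interpret the coefficient of $x^{p}$ in $x^{M}(x+1)^{N}\langle Y^*(u,x)f,v\rangle$ summand-wise in $i$. Performing the formal substitution $x=z^{-1}$ inside each residue (so $(x+1)^{N}\mapsto(1+z)^{N}z^{-N}$ and $\phi_i(x^{-1})\mapsto\phi_i(z)$) identifies this coefficient with the finite sum
\[
(-1)^{\wt u}\sum_i\frac{1}{i!}\operatorname{Res}_z\frac{(1+z)^{N}}{z^{Q_i+1}}\phi_i(z),\qquad Q_i=M-p+N-2\wt u+i,
\]
where every product takes place inside $\C((z))$. Feeding the hypothesis $f\in(A^{\prime}_{g,n,m}(V))^{*}$ into \eqref{eq3.1} at $s=i$ gives $\operatorname{Res}_z(1+z)^{N}z^{-P-k-1}\phi_i(z)=0$ for every $k\ge i$. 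A direct computation yields the key arithmetic identity $M+N-2\wt u=P-1$; hence for $p\le -1$ and any $i$ one has $Q_i\ge P+i$, so the residue above vanishes. This establishes the first displayed inclusion.

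For the remaining inclusion, the first part shows $(x+1)^{k+r/T}Y^*(u,x)f\in x^{-M}V^{*}[[x]]$ when $k=\wt u+\delta_{\bar m}(r)+\lfloor m\rfloor-1$, placing $f$ in $\mathfrak{D}(V)$ via \eqref{eq4.1}. Then
\[
Y^{R}(u,x)f=\zeta^{r}(1+x)^{-k-r/T}\bigl((x+1)^{k+r/T}Y^*(u,x)f\bigr)\in x^{-M}V^{*}[[x]]
\]
since $(1+x)^{-k-r/T}\in\C[[x]]$, so $u^{R}_{\ell}f=0$ for $\ell>M-1$. A short case check on whether $\delta_{\bar n}(T-r)$ is $0$ or $1$ verifies the inequality $M-1\le\wt u-1+n$; thus $u^{R}_{\wt u-1+i}f=0$ for every $i\in(1/T)\Z$ with $i>n$, which is exactly the condition $f\in\Omega_{n}(\mathfrak{D}(V),Y^{R})$.

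The main subtlety will be the formal meaning of the product $(x+1)^{N}Y^*(u,x)f$, since $(x+1)^{N}$ is bounded below in $x$ while $Y^*(u,x)f$ is bounded above. This is finessed by performing the multiplication and the change of variable separately on each $\phi_i$-summand, where the manipulations happen entirely in $\C((z))$ and the residue identification is unambiguous.
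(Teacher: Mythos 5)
Your proof is correct and follows essentially the same route as the paper: both reduce the truncation statement to \eqref{eq3.1} via the substitution $x\mapsto x^{-1}$ together with the expansion of $e^{xL(1)}u$ (your identity $M+N-2\wt u=P-1$ is exactly the exponent bookkeeping carried out in the paper's chain of residue computations), and both then convert the resulting lower-truncation of $Y^R(u,x)f$ into vanishing of modes plus an arithmetic inequality on the indices to conclude $f\in\Omega_n(\mathfrak{D}(V),Y^R)$. The only differences are cosmetic: you check the single inequality $M-1\le \wt u-1+n$ (which indeed suffices), whereas the paper pins down the threshold $n<\lfloor n\rfloor+\delta_{\bar n}(T-r)-r/T+1\le n+1$, and you make the verification $f\in\mathfrak{D}(V)$ explicit before invoking $Y^R$, a point the paper passes over silently.
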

\begin{proof}
Let $f \in\left(A_{g,n, m}^{\prime}(V)\right)^* \subset V^*$, i.e., $f \in V^*$ such that $\left.f\right|_{O_{g,n, m}^{\prime}(V)}=0$.  For any $k \in \mathbb{N}$ and $v\in V$, using (\ref{eq3.1}) we get
\begin{align*}
    &\Res_x x^{\lfloor n\rfloor+\delta_{\bar{n}}(T-r)+\wt u-r/T+k}(x+1)^{\wt u+\delta_{\bar{m}}(r)+\lfloor m\rfloor+r / T-1}\left\langle Y^*\left(u, x\right) f,v\right\rangle\\
    =\ &\Res_x x^{\lfloor n\rfloor+\delta_{\bar{n}}(T-r)+\wt u-r/T+k}(x+1)^{\wt u+\delta_{\bar{m}}(r)+\lfloor m\rfloor+r / T-1}
    \left\langle f, Y\left(e^{x L(1)}\left(-x^{-2}\right)^{L(0)} u, x^{-1}\right) v\right\rangle\\
    =\ &(-1)^{\wt u}\Res_x x^{\lfloor n\rfloor+\delta_{\bar{n}}(T-r)-\wt u-r/T+k}(x+1)^{\wt u+\delta_{\bar{m}}(r)+\lfloor m\rfloor+r / T-1}\left\langle f, Y\left(e^{x L(1)} u, x^{-1}\right) v\right\rangle\\
    =\ &
    (-1)^{\wt u}\operatorname{Res}_x \frac{(1+x)^{\wt u+\delta_{\bar{m}}(r)+\lfloor m\rfloor+r / T-1}}{x^{\lfloor m\rfloor+\lfloor n\rfloor+\delta_{\bar{m}}(r)+\delta_{\bar{n}}(T-r)+k+1}}\left\langle f, Y\left(e^{x^{-1} L(1)} u, x\right) v\right\rangle\\
    =\ &
    (-1)^{\wt u}\sum_{i \geq 0} \frac{1}{i!}\operatorname{Res}_x \frac{(1+x)^{\wt (L(1)^iu)+\delta_{\bar{m}}(r)+\lfloor m\rfloor+r / T+i-1}}{x^{\lfloor m\rfloor+\lfloor n\rfloor+\delta_{\bar{m}}(r)+\delta_{\bar{n}}(T-r)+k+i+1}}\left\langle f, Y(L(1)^iu, x) v\right\rangle
    = 0,
\end{align*}
proving  the first statement.
By this and  the definition of $Y^R(u, x) f$, we have
\begin{align}\label{ex-qe-00}
   &x^{\lfloor n\rfloor+\delta_{\bar{n}}(T-r)+\wt u-r/T}(1+x)^{\wt u+\delta_{\bar{m}}(r)+\lfloor m\rfloor+r / T-1}Y^R\left(u, x\right) f\\=\ &\zeta^r
x^{\lfloor n\rfloor+\delta_{\bar{n}}(T-r)+\wt u-r/T}(x+1)^{\wt u+\delta_{\bar{m}}(r)+\lfloor m\rfloor+r / T-1}Y^*\left(u, x\right) f\in \mathfrak{D}(V)[[x]], \nonumber
\end{align}
which implies
\begin{equation}\label{ex-qe-tion}
x^{\lfloor n\rfloor+\delta_{\bar{n}}(T-r)+\wt u-r/T}Y^R\left(u, x\right) f \in \mathfrak{D}(V)[[x]].\end{equation}
By Theorem \ref{g-1-weak}, we can write $Y^R(u,x)f$ as $\sum_{k\in -r/T+\Z}u^R_kx^{-k-1}f$. By this and \eqref{ex-qe-tion},
\[\mathfrak{D}(V)[[x]]\ni
x^{\lfloor n\rfloor+\delta_{\bar{n}}(T-r)+\wt u-r/T}Y^R\left(u, x\right) f=\sum_{k\in\Z}u^R_{\lfloor n\rfloor+\delta_{\bar{n}}(T-r)+\wt u-r/T+k}fx^{-k-1}.
\]
Thus, in particular, we have $
u^R_{\lfloor n\rfloor+\delta_{\bar{n}}(T-r)+\wt u-r/T+k}f=0$
for any $k\in\N$. Note that $$
n<\lfloor n \rfloor + \delta_{\bar{n}}(T - r)  - r/T+1= n + \delta_{\bar{n}}(T - r)  +(T-r-\bar{n})/T \leq n+1.
$$
Now in view of the definition of $\Omega_n(\mathfrak{D}(V), Y^R)$,   $f\in\Omega_n(\mathfrak{D}(V), Y^R)$, completing the proof of this proposition.
\end{proof}

The first statement of the following result follows from \cite[Remark 2.10]{L1} and \cite[Lemmas 2.6 and 2.7]{LS1}. The other statements can be proved similarly as \cite[Propositions 3.12 and 3.13]{L2}, respectively.
\begin{lem}\label{twi-rem}
Let $\left(E, Y_E\right)$ be a weak $g$-twisted $V$-module and $z_0 \in \mathbb{C}$. For $v \in V$, define
$$
Y_E^{\left[z_0\right]}(v, x)=Y_E\left(e^{-z_0\left(1+z_0 x\right) L(1)}\left(1+z_0 x\right)^{-2 L(0)} v, x /\left(1+z_0 x\right)\right)
$$
Then the pair $\left(E, Y_E^{\left[z_0\right]}\right)$ carries the structure of a weak $g$-twisted $V$-module and
$$
\operatorname{Res}_x x^m Y_E^{\left[z_0\right]}(v, x)=\operatorname{Res}_x x^m\left(1-z_0 x\right)^{2 \wt v-m-2} Y_E\left(e^{-z_0\left(1-z_0 x\right)^{-1} L(1)} v, x\right)
$$ for $m \in (1/T)\mathbb{Z}$. Moreover,
$
\Omega_n\left(E, Y_E\right)=\Omega_n\left(E, Y_E^{\left[z_0\right]}\right)
$ for any $n \in(1/T) \N$.
\end{lem}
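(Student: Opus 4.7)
The plan is to handle the three assertions of Lemma \ref{twi-rem} in sequence, following the change-of-coordinates machinery developed in the cited works of Li and Lepowsky--Schiffmann.

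For the first assertion, I would verify the weak $g$-twisted module axioms for $(E,Y_E^{[z_0]})$ directly. The formal substitution $y=x/(1+z_0 x)$ has compositional inverse $x=y/(1-z_0 y)$, and the prefactor $e^{-z_0(1+z_0 x)L(1)}(1+z_0 x)^{-2L(0)}$ is precisely the conjugation that transports the vertex operator action through this substitution. The identity $Y_E^{[z_0]}(\one,x)=\id_E$ is immediate from $L(0)\one=L(1)\one=0$; the twisted Jacobi identity for $Y_E^{[z_0]}$ then reduces to that for $Y_E$ via the substitution identities of Lemmas 2.6--2.7 of \cite{LS1} combined with the conjugation formula of Remark 2.10 of \cite{L1}. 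The fractional powers $x^{r/T}$ for $v\in V^r$ cause no issue, since $(1+z_0 x)^{r/T}$ lies in $\C[[x]]$ with constant term $1$.

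The second assertion is a formal-series manipulation paralleling Proposition 3.12 of \cite{L2}. Substituting $x=y/(1-z_0 y)$ inside the definition of $Y_E^{[z_0]}(v,x)$ and using $[L(0),L(1)]=-L(1)$ to commute $(1+z_0 x)^{-2L(0)}=(1-z_0 y)^{2L(0)}$ past the exponential, the argument of $Y_E$ transforms into a formal series whose $L(1)^k v$-component carries an explicit power of $(1-z_0 y)$ determined by $\wt v - k$. Incorporating the Jacobian contribution coming from the formal substitution of $x^m$ and matching factors of $(1-z_0 y)$ yields the overall exponent $2\wt v - m - 2$ on the right-hand side; renaming $y$ back to $x$ and taking the residue produces the asserted identity.

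For the third assertion, I would use statement (2) to translate the $\Omega_n$-condition. Given $f\in\Omega_n(E,Y_E)$ and $n<i\in(1/T)\Z$, setting $m=\wt v-1+i$, each monomial on the right-hand side of statement (2) takes the form $c\cdot(L(1)^j v)_{\wt v-1+i+\ell}f$ for some $j,\ell\ge 0$ (coming from the series expansions of $e^{-z_0(1-z_0 x)^{-1}L(1)}v$ and of $(1-z_0 x)^{2\wt v - m - 2}$). Since $\wt(L(1)^j v)=\wt v - j$, this index equals $\wt(L(1)^j v)-1+(i+\ell+j)$ with $i+\ell+j>n$, so the term vanishes by the hypothesis on $f$. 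This gives $\Omega_n(E,Y_E)\subseteq\Omega_n(E,Y_E^{[z_0]})$; the reverse inclusion follows from the involutive identity $(Y_E^{[z_0]})^{[-z_0]}=Y_E$, itself a direct consequence of statements (1) and (2). The main obstacle is the bookkeeping in statement (1): one must carefully check that the fractional-exponent part of the twisted Jacobi identity survives the substitution $y=x/(1+z_0 x)$, but once the substitution lemmas of \cite{LS1} are invoked this falls into place, and the remaining computations are mechanical applications of $[L(0),L(1)]=-L(1)$ together with the definition of $\Omega_n$.
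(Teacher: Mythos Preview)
Your proposal is correct and follows exactly the route the paper indicates: the paper does not give a self-contained proof but defers the first assertion to \cite[Remark 2.10]{L1} and \cite[Lemmas 2.6, 2.7]{LS1}, and the second and third assertions to the arguments of \cite[Propositions 3.12 and 3.13]{L2}, which is precisely the scaffolding you describe. Your explicit check of the $\Omega_n$-inclusion via the mode expansion of statement (2) and the involutive identity $(Y_E^{[z_0]})^{[-z_0]}=Y_E$ is the standard way those propositions are proved, so there is no divergence in approach.
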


\section{Proof of Theorem \ref{thm1.1}}
In this section we shall give the proof of Theorem \ref{thm1.1}.
Recall from \cite{FH1} that
$$
x^{-L(0)} L(1) x^{L(0)}=x L(1),
$$
which gives immediately the following  formula (see \cite{L1}):
$$
x^{-L(0)} e^{y L(1)} x^{L(0)}=e^{ xy L(1)}.
$$
In particular, we have
\begin{equation}\label{eq4.4}
e^{-x^2(x+1)^{-1}L(1)}(-x^{-2})^{L(0)}=(-x^{-2})^{L(0)}e^{(x+1)^{-1}L(1)}.
\end{equation}

Now we begin to present the proof of Theorem \ref{thm1.1}.

\begin{proof}
Let $p=n-m$ and let $r\in\{0,1,\ldots,T-1\}$ be such that $r\equiv\bar{m}-\bar{n}\bmod T$. Then
\begin{equation}\label{eq4.5}
 \delta_{\bar{m}}(r)+\lfloor m\rfloor+r / T+p=\xqz{n} + (\bar{n} + r - \bar{m})/T + \delta_{\bar{m}}(r) \in \N+1.
\end{equation}
Note that
$-1< -(T - 1)/T\le(\bar{n} + r - \bar{m})/T\in\Z.$
Thus, it is clear that  (\ref{eq4.5})  holds if
$
(\bar{n} + r - \bar{m})/T
\geq 1.
$
While for the case
$(\bar{n} + r - \bar{m})/T = 0$,
we have
$\bar{m} = \bar{n} + r \geq r$ and then
$\delta_{\bar{m}}(r) = 1,$
which implies  (\ref{eq4.5}).
Take any $v\in V^r$ and $f\in(A^\prime_{g,n,m}(V))^*$. It follows from Proposition \ref{ex-porp} and Lemma \ref{twi-rem} that
\begin{equation}\label{eq4.6}
f\in\Omega_n(\mathfrak{D}(V),Y^R)=\Omega_n(\mathfrak{D}(V),(Y^R)^{[-1]}).
\end{equation}  Set $q=-\xqz{m}-p-\delta_{\bar{m}}(r)-r/T$, which is an negative integer by (\ref{eq4.5}), and  set $s=\xqz{n}+|\xqz{p}|$. Then\begin{align*}
& \left\langle{\rm Res}_xx^{p-1}
\left(Y^R\right)^{[-1]}(z^{L(0)}v, )f,\mathbf{1}
\right\rangle\\
=&\operatorname{Res}_x x^{\wt v+p-1}\left\langle\left(Y^R\right)^{[-1]}\left( v, x\right) f, \mathbf{1}\right\rangle   \\
= & \operatorname{Res}_x x^{\wt v+p-1}(1+x)^{\wt v-p-1}\left\langle Y^R\left(e^{(1+x)^{-1} L(1)} v, x\right) f, \mathbf{1}\right\rangle \quad\quad\quad\quad\quad\quad{({\rm by\ Lemma}\ \ref{twi-rem})} \\
= & \operatorname{Res}_x \sum_{i=0}^{\infty}\binom{q}{i} x^{\wt v+p+i-1}(1+x)^{\wt v-p-q-1}\left\langle Y^R\left(e^{(1+x)^{-1} L(1)} v, x\right) f, \mathbf{1}\right\rangle  \\
= & \operatorname{Res}_x \sum_{k\geq 0}\frac{1}{k!}\sum_{i=0}^{\infty}\binom{q}{i} x^{\wt (L(1)^kv)+p+i+k-1}(1+x)^{\wt (L(1)^kv)-p-q-1}\left\langle Y^R\left(L(1)^k v, x\right) f, \mathbf{1}\right\rangle  \\
= & \operatorname{Res}_x \sum_{k\geq 0}\frac{1}{k!}\sum_{i=0}^{s}\binom{q}{i} x^{\wt (L(1)^kv)+p+i+k-1}(1+x)^{\wt (L(1)^kv)-p-q-1}\\
&\quad\times\left\langle Y^R\left(L(1)^k v, x\right) f, \mathbf{1}\right\rangle  \quad\quad\quad\quad\quad\quad\quad\quad\quad\quad\quad\quad\quad\quad\quad\quad\quad\quad \quad\quad\quad({\rm by}\ (\ref{eq4.6})) \\
=\ &\zeta^r \operatorname{Res}_x \sum_{k\geq 0}\frac{1}{k!}\sum_{i=0}^{s}\binom{q}{i} x^{\wt (L(1)^kv)+p+i+k-1}(x+1)^{\wt (L(1)^kv)-p-q-1} \\
&\quad\times\left\langle Y^*\left(L(1)^k v, x\right) f, \mathbf{1}\right\rangle  \quad\quad\quad\quad\quad\quad\quad\quad\quad\quad\quad\quad\quad\quad\quad\quad\quad\quad\quad\quad\quad({\rm by}\ (\ref{ex-qe-00})) \\
=\ &\zeta^r\operatorname{Res}_x \sum_{i=0}^{s}\binom{q}{i} x^{p+\wt v+i-1}(x+1)^{\wt v-p-q-1}
\left\langle Y^*\left(e^{(x+1)^{-1} L(1)} v, x\right) f, \mathbf{1}\right\rangle  \\
=\ &\zeta^r\operatorname{Res}_x \sum_{i=0}^{s}\binom{q}{i} x^{p+\wt v+i-1}(x+1)^{\wt v-p-q-1}\left\langle f, Y(e^{x L(1)}\left(-x^{-2}\right)^{L(0)} e^{(x+1)^{-1} L(1)} v, x^{-1}) \mathbf{1}\right\rangle  \\
=\ &\zeta^r\operatorname{Res}_x \sum_{i=0}^{s}\binom{q}{i} x^{p+\wt v+i-1}(x+1)^{\wt v-p-q-1}\\
&\quad\times\left\langle f, Y\left(e^{x(x+1)^{-1} L(1)}\left(-x^{-2}\right)^{L(0)} v, x^{-1}\right) \mathbf{1}\right\rangle \quad\quad\quad\quad\quad\quad\quad\quad\quad\quad\quad\quad\quad ({\rm by}\   (\ref{eq4.4}))\\
=\ &\zeta^r\operatorname{Res}_x \sum_{i=0}^{s}\binom{q}{i}\frac{(1+x)^{\wt v-p-q-1}}{x^{-q+i}}\left\langle f, Y\left(e^{(1+x)^{-1} L(1)}(-1)^{L(0)} v, x\right) \mathbf{1}\right\rangle  \\
=\ & \zeta^r\sum_{k \geq 0} \frac{1}{k!} \operatorname{Res}_x \sum_{i=0}^{s}\binom{q}{i}\frac{(1+x)^{\wt (L(1)^kv)-p-q-1}}{x^{-q+i}} \left\langle f, Y\left(L(1)^k(-1)^{L(0)} v, x\right) \mathbf{1}\right\rangle  \\
=\ & \zeta^r\sum_{k \geq 0} \frac{1}{k!} \operatorname{Res}_x \sum_{i=0}^{s}\binom{q}{i}\frac{(1+x)^{-q-1}}{x^{-q+i}} \left\langle f,L(1)^k(-1)^{L(0)} v\right\rangle \quad\quad\quad\quad\quad\quad\quad\quad\quad \quad({\rm by} \ (\ref{eq3.2})) \\
=\ & \zeta^r\sum_{k \geq 0} \frac{1}{k!} \left\langle f,L(1)^k(-1)^{L(0)} v
\right\rangle \quad\quad\quad\quad\quad\quad\quad\quad\quad\quad\quad\quad\quad\quad\quad \quad\quad\quad\quad\quad\quad({\rm by}\ \ref{eq4.5}))\\
=\ &\zeta^r\left\langle f,\theta(v)  \right\rangle \quad\quad\quad\quad\quad\quad\quad\quad\quad\quad\quad\quad\quad\quad\quad\quad\quad\quad\quad\quad\quad\quad\quad\quad({\rm see\ Lemma}\ \ref{lem3.2}\,(2))
\end{align*}
Thus  for any $u\in V^{r}\cap O_{g^{-1},m,n}(V)$, by Lemma \ref{lem3.1} and (\ref{eq4.6}),  we have
\begin{align*}
 0=\left\langle
{\rm Res}_xx^{p-1}\left(Y^R\right)^{[-1]}(x^{L(0)}u, x)f,\mathbf{1}
\right\rangle=\zeta^r\left\langle f,\theta(u) \right\rangle ,
\end{align*}
which implies $\theta(u)\in O^\prime_{g,n,m}(V)$ by Remark \ref{r-m-k-1}.  Then by Lemma \ref{lem3.3}, $O_{g,n,m}(V)=O^\prime_{g,n,m}(V)$, completing the proof.
\end{proof}

\section*{Acknowledgment}
J. Han is supported by the National Natural Science Foundation of China (No. 12271406).

\end{document}